\newtheorem{theo}{Theorem}[section]
\newtheorem{pro}[theo]{Proposition}
\newtheorem{coro}[theo]{Corollary}
\newtheorem{lem}[theo]{Lemma}
\theoremstyle{definition}
\newtheorem{rem}[theo]{Remark}
\newtheorem{defi}[theo]{Definition}
\newcommand{\spec}{Spec}
\newcommand{\kdim}{\mbox{{\it k}-{\rm dim}}}
\newcommand{\be}{\begin{enumerate}}
\newcommand{\ee}{\end{enumerate}}
\author{S.M. Javdannezhad}
\address{Sayed Malek Javdannezhad, Department of Science, Shahid Rajaee Teacher Training University: Tehran, Tehran, IR}
\email{sm. javdannezhad@gmail.com}
\author{N. Shirali}
\address{Nasrin Shirali, Department of Mathematics, Shahid Chamran University of Ahvaz,  Ahvaz, Iran}
\email{shirali\_n@scu.ac.ir\\nasshirali@gmail.com}
\title[Topological expression of dual-classical Krull dimension]{A topological expression for  dual-classical Krull dimension of rings}
\begin{document}

\vspace{1.3 cm}

\thanks{{\scriptsize
\hskip -0.4 true cm MSC(2010): Primary: 16P60, 16P20 ; Secondary:
16P40.
\newline Keywords: strongly hollow ideals, $SH$-topology, derived dimension, dual-classical Krull dimension.\\
}}

\begin{abstract}
Let $R$ be a ring and  $\mathcal X = \mathcal{SH}(R)-\{0\}$ be the set of  all   non-zero strongly hollow ideals (briefly, $sh$-ideals) of   $R$. 
We first  study the concept   $SH$-topology and investigate some of the basic properties of a topological space with this topology. It is  shown  that if  $\mathcal X $ is  with $SH$-topology, then 	$\mathcal X$ is Noetherian if and only if every subset of $\mathcal X$ is quasi-compact if and only if  $R$ has $dcc$ on semi-$sh$-ideals. 
  Finally,  the relation between the dual-classical Krull dimension of $R$ and the derived dimension of  $\mathcal X$ with a certain topology has been studied. It is proved that,  if $\mathcal X$ has derived dimension, then $R$ has the dual-classical krull dimension and in case $R$ is a $D$-ring (i.e., the lattice of ideals of $R$ is distributive), then the converse is true. Moreover these two dimension differ by at most $1$. 
 
 \end{abstract}
\maketitle
\section{Introduction and preliminaries}
The classical Krull dimension for a ring have been studied by several authors and for commutative Noetherian rings, it coincides with the Krull dimension as introduced by Gabriel and Rentschler, see\cite{go-ro}. In $1972$, Krause extended the concepts of both dimensions to arbitrary ordinals and investigated their relationship. He also, showed that a ring $R$ has the classical Krull dimension if and only if it has $acc$ on prime ideals. In $1980$, Karamzedeh \cite{kar} studied the derived dimension with respect to a certain topology, which he defined on  $X=\spec(R)$, the set of all prime ideals of $R$ and showed that $R$ has classical Krull dimension if and only if $X$ has derived dimension and these two dimensions differ by at most $1$.  
 Recently, in \cite{ja-sh}, we introduced the concept of dual-classical Krull dimension of rings for any ordinal number, on the set of strongly hollow ideals. Our definition is in the same vein, as definition of the classical Krull dimension by Krause, on the set of prime ideals, see \cite{go-ro}. We also, dualized almost all the basic results of classical Krull dimension. At the end of \cite{ja-sh},  we raised a question namely,  what is the  topological expression for the algebraic concept of dual-classical Krull dimension. In this paper, we answer this question. 
 For this purpose, we first define  $SH$-topology and investigate some of the basic properties of a topological space with this topology. Among various findings, it is  proved  that,
 $R$ has $dcc$ on $sh$-ideals if and only if	$R$ has $dcc$ on finite sum of non-comparable $sh$-ideals. 
 Also, it is shown that if  $\mathcal X $ is a topological space  with $SH$-topology,  then 	$\mathcal X$ is Noetherian if and only if every subset of $\mathcal X$ is quasi-compact if and only if  $R$ has $dcc$ on semi-$sh$-ideals. 
 Next, we define $W$-topology and the connection between  derived dimension and dual-classicall dimension is verified.
It is proved that, if $R$ is a $D$-ring and $\mathcal X = \mathcal{SH}(R)-\{0\}$ be with $W$-topology, then 
 $\mathcal X$ has derived dimension if and only if $R$ has dual-classical Krull dimension. Moreover, these two dimension differ by at most $1$. 
In  essence, we dualize almost all of the results which Karamzadeh obtained in \cite{kar}. 
It is convenient, when we are dealing with classical and dual-classical Krull dimension, to begin our list of ordinals with $1$.\\
Throughout this paper, all rings are associative with $1\neq 0$ and by an ideal we mean a two-sided one.  An ideal $S$ of a ring $R$ is small, if $S+A\neq R$, for every proper ideal $A$ of $R$. In what follows, we  recall some definitions and facts from \cite{ja-sh}, which are needed. For more details and some of  the basic facts about $sh$-ideals and the dual-classical Krull dimension of a ring, the reader is referred to \cite{ja-sh}. 

\begin{defi}
 An ideal $L$ of a ring $R$ is strongly hollow (briefly, $sh$-ideal), if $ L\subseteq A+B$ implies that $L \subseteq A$ or $L \subseteq B$, for every ideals $A,~B$ of $R$. 
\end{defi}

The notation $L \subseteq_{sh} R$  means that $L$ is an $sh$-ideal of $R$. Also,  $\mathcal{SH}(R)$ denote the set of all $sh$-ideals of $R$. Note that always, $0 \in \mathcal {SH}(R)$. Also, if $R$ is a simple ring, then 
$\mathcal{SH}(R) = \{0, R\}$. $R$ is an  $sh$-ring, if it is an $sh$-ideal or equivalently, $A+B \ne R$, for any proper ideals $A$ and $B$ of $R$. Every simple ring is an $sh$-ring.

\begin{defi}
An ideal $I$ is called semi-$sh$-ideal if it is equal to the sum of all $sh$-ideals contained in itself.
A  semi-$sh$-ring is a non-zero ring $R$, for which $R$ is an $sh$-ideal or equivalently,   $R= \sum \mathcal{SH}(R) $.
\end{defi}

 \begin{defi}\label{d3}
Let $R$ be a ring and $\mathcal Y = \mathcal Y(R)= \mathcal{SH}(R)$. Set $\mathcal Y_{-1}=
\{0\}$ and for each ordinal number $\alpha \geq 0$, let
$\mathcal Y_\alpha $ be the set of all $L \in \mathcal Y $ such
that for every $L' \in \mathcal Y$ strictly contained in $L$, there exists an ordinal $\beta < \alpha$ such that $L' \in
\mathcal Y_\beta $. Then $\mathcal Y_0  \subsetneq \mathcal Y_1
\subsetneq \mathcal Y_2  \subsetneq \dots $. The smallest ordinal
 $\alpha$, for which $\mathcal Y_\alpha = \mathcal Y$ is
called dual-classical Krull dimension of $R$ and denoted by
$d.cl. \kdim\,R$.
\end{defi}

  Note that,  $d.cl.\kdim\,R=-1$ if and only if $\mathcal Y= \{0\}$, that is $R$ has no any non-zero $sh$-ideal.  Also, $\mathcal Y_0$ consists of  $0$ and all  minimal $sh$-ideals. 
  
\begin{pro}
	Let $R$ be a ring in which  the intersection of maximal ideals  is zero. Then $d.cl.\kdim\,R \leq 0$.
\end{pro}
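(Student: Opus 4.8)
The plan is to translate the inequality $d.cl.\kdim\,R\le 0$ into an elementary statement about $sh$-ideals and then finish with a one-line application of the $sh$-property. By Definition \ref{d3} and the remark following it, $d.cl.\kdim\,R\le 0$ is equivalent to $\mathcal Y_0=\mathcal Y$, which says precisely that every non-zero $sh$-ideal $L$ of $R$ is minimal among the non-zero $sh$-ideals, i.e. the only $sh$-ideal strictly contained in $L$ is $0$. Hence it suffices to prove that no non-zero $sh$-ideal of $R$ properly contains a non-zero $sh$-ideal.

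To do this I would argue by contradiction. Assume $L',L\in\mathcal{SH}(R)$ are non-zero with $L'\subsetneq L$. Since $L'\neq 0$ and the intersection of all maximal ideals of $R$ is zero, there is a maximal ideal $\mathfrak m$ of $R$ with $L'\not\subseteq\mathfrak m$ (otherwise $L'$ would lie in that intersection and hence be zero). Maximality of $\mathfrak m$ gives $\mathfrak m\subsetneq L'+\mathfrak m$, so $L'+\mathfrak m=R$, and in particular $L\subseteq R=L'+\mathfrak m$. Since $L$ is an $sh$-ideal, we get $L\subseteq L'$ or $L\subseteq\mathfrak m$; the first contradicts $L'\subsetneq L$, and the second yields $L'\subseteq L\subseteq\mathfrak m$, contradicting the choice of $\mathfrak m$. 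This contradiction finishes the proof.

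I do not anticipate any serious difficulty: apart from routine unwinding of Definition \ref{d3}, the only input is the standard observation that a non-zero ideal escapes some maximal ideal whenever the maximal ideals intersect in zero. The one place to be a little careful is the convention at the bottom of the derived filtration, namely $\mathcal Y_{-1}=\{0\}$, so that the conclusion "$d.cl.\kdim\,R\le 0$" is read correctly as "every non-zero $sh$-ideal is minimal" (the degenerate case $d.cl.\kdim\,R=-1$, where there are no non-zero $sh$-ideals at all, being a trivial sub-case).
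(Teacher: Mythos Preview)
Your argument is correct: the contradiction via a maximal ideal $\mathfrak m$ avoiding $L'$ and the $sh$-property of $L$ is exactly the right idea, and your unwinding of $d.cl.\kdim\,R\le 0$ as $\mathcal Y_0=\mathcal Y$ is accurate (indeed, you only use that $L'$ is non-zero and that $L$ is an $sh$-ideal, so you actually prove the slightly stronger fact that every non-zero $sh$-ideal is a minimal non-zero ideal). Note, however, that the paper does not supply its own proof of this proposition---it is recalled from \cite{ja-sh} as part of the preliminaries---so there is no in-paper argument to compare against.
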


\begin{theo}\label{t1}
Let $R$ be a ring.
\begin{enumerate}
	\item If $R$ satisfies $dcc$ on $sh$-ideals, then $R$ has dual-classical Krull dimansion.
	\item If $R$ is a $D$-ring, the converse of $(1)$ holds.
\end{enumerate}
 \end{theo}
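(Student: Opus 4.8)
The plan is to prove this as the $sh$-ideal analogue of Krause's theorem ($R$ has classical Krull dimension iff it has $acc$ on primes), running Karamzadeh's transfinite argument for $X=\spec(R)$ on the poset $(\mathcal Y,\subseteq)$ with $\mathcal Y=\mathcal{SH}(R)$. The crux is that the filtration $\mathcal Y_{-1}\subseteq\mathcal Y_0\subseteq\mathcal Y_1\subseteq\cdots$ of Definition \ref{d3} is an ascending chain of \emph{subsets of the fixed set} $\mathcal Y$, hence eventually stationary, say $\mathcal Y_{\alpha_0}=\mathcal Y_{\alpha_0+1}=\cdots$, and its stationary value equals all of $\mathcal Y$ exactly when $\mathcal{SH}(R)$ has no infinite strictly descending chain, i.e. when $R$ has $dcc$ on $sh$-ideals. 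Thus both parts amount to the two halves of ``$d.cl.\kdim\,R$ exists $\iff$ $R$ has $dcc$ on $sh$-ideals'', with (1) the easy half. Two preliminary routine checks are needed: $\alpha\le\gamma\Rightarrow\mathcal Y_\alpha\subseteq\mathcal Y_\gamma$, a one-line transfinite induction on the defining clause; and the membership clauses of $\mathcal Y_{\alpha_0+1}$ and $\mathcal Y_{\alpha_0+2}$ coincide once $\mathcal Y_{\alpha_0}=\mathcal Y_{\alpha_0+1}$, so the stabilization is permanent.

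\emph{Part (1).} Assume $R$ has $dcc$ on $sh$-ideals and suppose, for a contradiction, that $\mathcal Y_{\alpha_0}\ne\mathcal Y$. Pick $L^{(0)}\in\mathcal Y\setminus\mathcal Y_{\alpha_0}$. Since $L^{(0)}\notin\mathcal Y_{\alpha_0+1}$, the defining clause fails for $L^{(0)}$: there is $L^{(1)}\in\mathcal Y$ with $L^{(1)}\subsetneq L^{(0)}$ and $L^{(1)}\notin\mathcal Y_\beta$ for all $\beta\le\alpha_0$, so $L^{(1)}\in\mathcal Y\setminus\mathcal Y_{\alpha_0}$ as well. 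Iterating (using permanence of stabilization at each step) produces a strictly descending chain $L^{(0)}\supsetneq L^{(1)}\supsetneq L^{(2)}\supsetneq\cdots$ of $sh$-ideals, contradicting $dcc$. Hence $\mathcal Y_{\alpha_0}=\mathcal Y$ and $d.cl.\kdim\,R$ exists, with $d.cl.\kdim\,R\le\alpha_0$.

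\emph{Part (2).} I would argue the contrapositive. Suppose $R$ fails $dcc$ on $sh$-ideals and fix a strictly descending chain $L_0\supsetneq L_1\supsetneq L_2\supsetneq\cdots$ in $\mathcal{SH}(R)$; for $L\in\mathcal Y$ lying in some $\mathcal Y_\alpha$ write $\rho(L)$ for the least such $\alpha$. If $L_0\in\mathcal Y_\alpha$ for some $\alpha$, then $\rho(L_0)$ is defined, and applying the defining clause of $\mathcal Y_{\rho(L_0)}$ to $L_1\subsetneq L_0$ shows $\rho(L_1)$ is defined with $\rho(L_1)<\rho(L_0)$; inductively $\rho(L_{n+1})<\rho(L_n)$ for all $n$, an infinite strictly decreasing sequence of ordinals — absurd. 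Hence $L_0\in\mathcal Y\setminus\mathcal Y_\alpha$ for every $\alpha$; since $L_0\in\mathcal Y$, no $\mathcal Y_\alpha$ equals $\mathcal Y$, so $R$ has no dual-classical Krull dimension. This gives ``$d.cl.\kdim\,R$ exists $\Rightarrow$ $R$ has $dcc$ on $sh$-ideals''.

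\emph{Expected main difficulty.} None of the steps above is computationally hard; the care is entirely in the transfinite bookkeeping and in the base and limit stages of Definition \ref{d3}. The point I would think hardest about is pinning down the precise role of the $D$-ring hypothesis in (2): the implication just sketched uses only the well-ordering of the ordinals, so I expect the intended route for the converse passes through one of the intermediate chain conditions established earlier — $dcc$ on semi-$sh$-ideals, or on finite sums of non-comparable $sh$-ideals — where distributivity of the ideal lattice is what allows one to pass back to $sh$-ideals proper, and it is that hypothesis, rather than this elementary implication, that becomes indispensable for the sharper comparison with the derived dimension in the next section.
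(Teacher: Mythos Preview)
Your arguments for both parts are correct. Note, however, that this theorem is not proved in the present paper: it appears in the preliminaries section as a fact \emph{recalled} from \cite{ja-sh}, so there is no proof here to compare against.

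On the substance: your observation about the $D$-ring hypothesis is well taken. The contrapositive argument you give for part~(2) --- assigning to each $L\in\mathcal Y_\alpha$ the least such $\alpha$ as a rank $\rho(L)$, and reading off $\rho(L_{n+1})<\rho(L_n)$ directly from Definition~\ref{d3} --- uses nothing beyond the well-foundedness of the ordinals and is valid for an arbitrary ring. So the equivalence ``$d.cl.\kdim R$ exists $\iff$ $R$ has $dcc$ on $sh$-ideals'' holds without any distributivity assumption, and the $D$-ring hypothesis in part~(2) is, on the face of Definition~\ref{d3}, superfluous. Your speculation that the hypothesis is really there to feed the later comparison with the derived dimension is reasonable, but as far as Theorem~\ref{t1} itself is concerned your elementary argument already does the job.
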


\section{$SH$-topology}
Let  $\mathcal X = \mathcal X(R)$ be  the set of  all   non-zero $sh$-ideals of $R$, that is $\mathcal X(R) = \mathcal{SH}(R)-\{0\}$. In this section, we introduce and study the concept of $SH$-topology.\\
We begin with the following  definition.
\begin{defi}
Let $R$ be a ring and $I$ be an ideal of $R$. We define \linebreak
$V(I) = \{L \in \mathcal X : L \subseteq I\}$ and $\underline{I} = \Sigma V(I) = \Sigma_{L \in V(I)}L$, that is $\underline{I}$ equals to the sum of all  $sh$-ideals, which contained in $I$.
\end{defi}

In what follows, we give the basic properties of these concepts.

\begin{lem}\label{l1}
	Let $R$ be a ring and  $I$, $J$ and $\{I_\gamma\}_{\gamma \in \Gamma}$ be ideals of $R$. Then 
	\be 
	\item $ V(I) = \emptyset$ if and only if $I$ does not contain any non-zero $sh$-ideal of $R$.
	\item $V(I) = \{I\}$ if and only if $I$ is a minimal $sh$-ideal of $R$.
	\item $V(I) = \mathcal X$ if and only if $I$ contains all of the $sh$-ideals of $R$.
	\item $V(I + J)= V(I) \cup V(J) $
	\item $V(\cap I_\gamma) =  \cap V(I_\gamma) $
	\item $V(I) = V(\underline I)$.
	\item $I = \underline{I}$ if and only if $I$ is a semi-$sh$-ideal.
	\item If $V(I) = V(J)$, then $\underline{I} = \underline{J}$.
	\item $\underline{\cap{I_\gamma}} = \cap \underline{I_\gamma}$.
  \ee
\end{lem}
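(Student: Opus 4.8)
The plan for (1)--(8) is to unwind the definitions; almost all of these are immediate. For (1), (3), (5) I would simply note that $V(I)$ is by definition the collection of members of $\mathcal X$ (i.e. nonzero $sh$-ideals) lying inside $I$, so that ``$V(I)=\emptyset$'', ``$V(I)=\mathcal X$'', and ``$L\subseteq\bigcap_\gamma I_\gamma\iff L\subseteq I_\gamma$ for all $\gamma$'' translate verbatim into the asserted statements. For (2), from $I\in V(I)$ one reads off $I\in\mathcal X$, and then $V(I)=\{I\}$ says precisely that no nonzero $sh$-ideal is strictly smaller than $I$, i.e. $I$ is a minimal $sh$-ideal, and conversely. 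For (7) I would observe that $\underline I=\Sigma V(I)$ is the sum of the nonzero $sh$-ideals contained in $I$, and since adjoining the zero ideal to that sum changes nothing, $\underline I$ is the sum of \emph{all} $sh$-ideals contained in $I$; comparing with the definition of semi-$sh$-ideal gives $I=\underline I\iff I$ is semi-$sh$. For (8) I simply apply $\Sigma$ to the equality $V(I)=V(J)$. The one place in (1)--(8) where the hypothesis ``strongly hollow'' is actually used is (4): the inclusion $V(I)\cup V(J)\subseteq V(I+J)$ holds because $I,J\subseteq I+J$, while for $V(I+J)\subseteq V(I)\cup V(J)$ I would take $L\in\mathcal X$ with $L\subseteq I+J$ and invoke the defining property of an $sh$-ideal to conclude $L\subseteq I$ or $L\subseteq J$.

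For (6) the plan is: $\underline I=\sum_{L\in V(I)}L\subseteq I$ because every summand lies in $I$, so $V(\underline I)\subseteq V(I)$ since $V$ is monotone; conversely every $L\in V(I)$ is itself one of the summands of $\underline I$, hence $L\subseteq\underline I$, i.e. $L\in V(\underline I)$. Two consequences I would record here for use in (9): the operator $I\mapsto\underline I$ is monotone (from monotonicity of $V$), and $\underline{\underline I}=\underline I$ (combine (6) with (8)), so by (7) every $\underline I$ is a semi-$sh$-ideal.

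Finally (9). One inclusion is free: $\bigcap_\gamma I_\gamma\subseteq I_\delta$ for each $\delta$ gives $\underline{\bigcap_\gamma I_\gamma}\subseteq\underline{I_\delta}$ by monotonicity, hence $\underline{\bigcap_\gamma I_\gamma}\subseteq\bigcap_\gamma\underline{I_\gamma}$. For the reverse inclusion the plan is first to show $\bigcap_\gamma\underline{I_\gamma}$ is a semi-$sh$-ideal and then to identify it. Using (5) twice and (6) once, $V\bigl(\bigcap_\gamma\underline{I_\gamma}\bigr)=\bigcap_\gamma V(\underline{I_\gamma})=\bigcap_\gamma V(I_\gamma)=V\bigl(\bigcap_\gamma I_\gamma\bigr)$, so by (8) $\underline{\bigcap_\gamma\underline{I_\gamma}}=\underline{\bigcap_\gamma I_\gamma}$; thus once $\bigcap_\gamma\underline{I_\gamma}$ is known to be semi-$sh$, (7) rewrites the left-hand side as $\bigcap_\gamma\underline{I_\gamma}$ and the lemma follows. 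Since each $\underline{I_\gamma}$ is already semi-$sh$, everything reduces to the statement that \emph{an intersection of semi-$sh$-ideals is a semi-$sh$-ideal}: given $x\in\bigcap_\gamma\underline{I_\gamma}$, for each $\gamma$ one has $x$ in a finite sum $L^\gamma_1+\dots+L^\gamma_{n_\gamma}$ of nonzero $sh$-ideals contained in $I_\gamma$, so the two-sided ideal $(x)\subseteq L^\gamma_1+\dots+L^\gamma_{n_\gamma}$, and the aim is to push this down to $(x)\subseteq\underline{\bigcap_\gamma I_\gamma}$, using that an $sh$-ideal sitting inside a finite sum of ideals lies in one of the summands. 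I expect this closure-under-intersection step to be the only genuinely non-formal point of the lemma — the place where one must exploit the structure of $sh$-ideals themselves (and plausibly distributivity of the ideal lattice) rather than merely transport the identities for $V$.
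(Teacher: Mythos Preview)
Your treatment of (1)--(8) is correct and matches the paper: the paper's proof (despite its mislabeled numbering) only writes out the arguments for (4) and (6), and those are exactly the arguments you give; the remaining items it leaves implicit, and your one-line justifications for them are the intended ones.

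For (9) you go further than the paper, which simply does not address that item in its proof. Your reduction is sound: using (5), (6), and (8) one gets $\underline{\bigcap_\gamma \underline{I_\gamma}}=\underline{\bigcap_\gamma I_\gamma}$, so the claim is equivalent to $\bigcap_\gamma \underline{I_\gamma}$ being a semi-$sh$-ideal, i.e.\ to the assertion that an intersection of semi-$sh$-ideals is again semi-$sh$. You are right to flag this as the only nontrivial point and to suspect that the defining property of $sh$-ideals alone does not obviously force it; your sketch with $(x)\subseteq L_1^\gamma+\dots+L_{n_\gamma}^\gamma$ does not close the gap without some further lattice-theoretic input (e.g.\ distributivity), and the paper supplies no argument here either. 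So on (9) your proposal is not incomplete relative to the paper --- it is in fact more careful, since it isolates precisely the step that is missing from both.
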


\begin{proof}
	$(1)$ Since $\underline{I} \subseteq I$, we have $V(\underline{I}) \subseteq V(I)$. Conversely, if $L \in V(I)$, then $L$ is a $sh$-ideal contained in $I$, hence $L \subseteq \underline{I}$ and so $L \in V(\underline{I})$. Consequently, $V(I) \subseteq V(\underline{I})$ and we are done.\\
	$(2)$ It is clear.\\
	$(3)$ Clearly, $V(I_1) \cup V(I_2) \subseteq V(I_1+I_2)$. Now, let $L \in V(I_1 + I_2)$, then $L$ is a non-zero $sh$-ideal and $L \subseteq I_1+I_2$. If $L \notin V(I_1)$, then $L \nsubseteq I_1 $ and so $L \subseteq I_2$, that is, $L \in V(I_2)$. Hence, $V(I_1+I_2)  \subseteq V(I_1) \cup V(I_2)$. Thus, $V(I_1) \cup V(I_2) = V(I_1+I_2)$
\end{proof}

\begin{theo}
	Let $R$ be a ring. The sets $V(I)$, where $I$ is an ideal of $R$,  satisfy the axioms for closed sets in a toplogical space.
\end{theo}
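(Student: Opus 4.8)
The claim is that the sets $V(I)$ form the closed sets of a topology on $\mathcal X$. To verify the closed-set axioms, I would check four things: that $\emptyset$ and $\mathcal X$ are of the form $V(I)$, that finite unions of such sets are again of this form, and that arbitrary intersections are again of this form. The key observation is that all three of the nontrivial facts needed are already packaged in Lemma~\ref{l1}; the proof is really just an assembly of those items, so the bulk of the work is bookkeeping rather than new argument.

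First I would dispatch the extreme cases. Taking $I = 0$ gives $V(0) = \emptyset$ by Lemma~\ref{l1}(1) (no non-zero $sh$-ideal is contained in the zero ideal), and taking $I = R$ gives $V(R) = \mathcal X$ since every $sh$-ideal is contained in $R$; so both the empty set and the whole space are closed. Next, for finite unions it suffices to treat two sets $V(I)$ and $V(J)$, and Lemma~\ref{l1}(4) gives $V(I)\cup V(J) = V(I+J)$ directly, whence finite unions of closed sets are closed by an obvious induction. Finally, for an arbitrary family $\{I_\gamma\}_{\gamma\in\Gamma}$ of ideals, Lemma~\ref{l1}(5) gives $\bigcap_\gamma V(I_\gamma) = V\bigl(\bigcap_\gamma I_\gamma\bigr)$, so arbitrary intersections of closed sets are closed.

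There is essentially no obstacle here: every step is a citation of an already-proved part of Lemma~\ref{l1}. The only thing one should be a little careful about is the numbering mismatch between the \emph{statement} of Lemma~\ref{l1} and its (partial) \emph{proof} in the excerpt --- the proof text labeled ``$(1)$'' actually establishes item (6) of the statement, ``$(3)$'' establishes item (4), and so on --- but since we are allowed to assume the lemma as stated, this does not affect the argument: I would simply invoke parts (1), (4), and (5) of Lemma~\ref{l1} as displayed in its statement. So the write-up is short: state the four axioms, and for each one point to $V(0)=\emptyset$, $V(R)=\mathcal X$, Lemma~\ref{l1}(4), and Lemma~\ref{l1}(5) respectively, noting that closure under finite union follows from the two-set case by induction.
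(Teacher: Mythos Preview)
Your proposal is correct and follows essentially the same approach as the paper: verify $V(0)=\emptyset$ and $V(R)=\mathcal X$, then invoke parts (4) and (5) of Lemma~\ref{l1} for finite unions and arbitrary intersections, respectively. The paper's proof is virtually identical, differing only in that it does not explicitly cite part (1) for $V(0)=\emptyset$ and does not spell out the induction for finite unions.
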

\begin{proof}
 Clearly, $V(0) = \emptyset$, so the empty set is closed. Also, $V(R) = \mathcal X$, so the entire space is closed. Now, let $I_1, I_2$ be two ideals of $R$. By Lemma \ref{l1}(4),  $V(I_1) \cup V(I_2) = V(I_1+I_2)$, so $V(I_1) \cup V(I_2)$ is closed.  is a closed set. Finally, for any collection $\{I_\gamma\} $ of ideals, Lemma \ref{l1}(5) implies that $\cap V(I_\gamma) = V(\cap I_\gamma)$, hence $\cap V(I_\gamma)$ is closed. 
\end{proof}

 \begin{defi}
 The collection of complements of the sets $V(I)$ is a topology on $\mathcal X$ and we call it $SH$-topology.
 \end{defi}

 Recall that  a topological space $X$ is a $T_0$-space if and only if for every  dictinct points $x ,y \in X$, there is an open set containing one and not the other. Also, $X$ is a $T_1$-space if and only if for every  dictinct points $x ,y \in X$, there are open sets containing one and not the other. Finally,  $X$ is a $T_2$-space (Hausdorff space) if and only if for every  dictinct points $x ,y \in X$, there are distinct open sets $U$ and $V$  in $X$ with $x\in U$ and $y\in V$.

\begin{rem}
	The $SH$-topology on  $\mathcal X$ is trivial if and only if it is single point. For this, let   $\mathcal X$ is trivial, then, for all ideal $I$ either  $V(I)=\emptyset$ or $V(I)=\mathcal X$. Now, if $L_1 \ne L_2$ are $sh$-ideals of $R$, then $V(L_1)=\mathcal X =V(L_2)$.  So $L_1\subseteq L_2$ and $L_2\subseteq L_1$, thus $L_1=L_2$, and we are done. Conversely, if $\mathcal X=\{L\}$ then for all ideal $I$ we have $V(I)=\emptyset$ or $V(I)=\{L\}=\mathcal X$. Hence, $\mathcal X$ is a trivial space.
\end{rem}

\begin{lem}
	Let $R$ be a ring and $\mathcal X = \mathcal X(R)$  be with $SH$-topology. Then
	\be
	\item
	$\mathcal X$  is a  $T_0$-space.
	\item 
	$\mathcal X$  is a  $T_1$-space if and only if every non-zero $sh$-ideal of $R$ is a minimal $sh$-ideal if and only if every non-trivial $sh$-ideal of $R$ is a maximal $sh$-ideal. 
	\item If $\mathcal X$ is hausdorff, then there exist ideals $I_1$ and $I_2$, such that  every $sh$-ideal of $R$ is contained either in $I_1$ or $I_2$.
	
	\ee
\end{lem}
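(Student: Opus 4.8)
The plan is to reduce all three parts to one structural observation: in the $SH$-topology the closed sets are exactly the sets $V(I)$, and since $L\in V(I)$ iff $L\subseteq I$, the smallest closed set containing a point $L\in\mathcal X$ is $V(L)=\{L'\in\mathcal X:L'\subseteq L\}$; that is, $\overline{\{L\}}=V(L)$. With this identification each separation axiom becomes a purely order-theoretic statement about $sh$-ideals, and all three items fall out quickly.

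For (1), I would argue directly: given distinct $L_1,L_2\in\mathcal X$ they cannot each be contained in the other, so after relabelling $L_1\nsubseteq L_2$; then $L_1\notin V(L_2)$ while $L_2\in V(L_2)$, so $\mathcal X\setminus V(L_2)$ is an open set witnessing the $T_0$ condition. (Equivalently: $V(L_1)=V(L_2)$ would force $L_1\subseteq L_2\subseteq L_1$, so distinct points have distinct closures.) For (2), I would use the standard fact that a space is $T_1$ iff every singleton is closed, so $\mathcal X$ is $T_1$ iff $\overline{\{L\}}=\{L\}$, that is (by the observation above) $V(L)=\{L\}$, for every $L\in\mathcal X$; by Lemma \ref{l1}(2) this is exactly the condition that every non-zero $sh$-ideal is minimal. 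The remaining equivalence with ``every non-trivial $sh$-ideal is maximal'' is then elementary order theory: both conditions simply say that no two distinct non-zero $sh$-ideals of $R$ are comparable under inclusion, and I would just record the two short implications; the qualifier ``non-trivial'' does nothing beyond dropping $R$ itself, which is the top element of $\mathcal{SH}(R)$ whenever $R$ is an $sh$-ring.

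For (3), the step to get right --- and, I expect, the only one with real content --- is the translation of disjointness of basic open sets. By Lemma \ref{l1}(4), $(\mathcal X\setminus V(I))\cap(\mathcal X\setminus V(J))=\mathcal X\setminus V(I+J)$, so two such open sets are disjoint precisely when $V(I+J)=\mathcal X$, i.e.\ precisely when $I+J$ contains every $sh$-ideal of $R$. Granting this: if $\mathcal X$ has at most one point the conclusion is immediate (take $I_1=I_2$ equal to that point, or $I_1=I_2=0$ if $\mathcal X=\emptyset$), so assume $\mathcal X$ has two distinct points $L_1,L_2$. Hausdorffness supplies disjoint open sets $U\ni L_1$ and $V\ni L_2$, say $U=\mathcal X\setminus V(I_1)$ and $V=\mathcal X\setminus V(I_2)$; disjointness gives $V(I_1)\cup V(I_2)=\mathcal X$, which says every $L\in\mathcal X$ satisfies $L\subseteq I_1$ or $L\subseteq I_2$, and since also $0\subseteq I_1$, every $sh$-ideal of $R$ lies in $I_1$ or in $I_2$, as desired. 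I would also observe that $U$ and $V$ being non-empty forces $V(I_1),V(I_2)\subsetneq\mathcal X$, so $I_1$ and $I_2$ genuinely fail to absorb all $sh$-ideals (in particular they are proper when $R$ is an $sh$-ring), which is what keeps the conclusion from being vacuous.
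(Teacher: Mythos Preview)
Your argument is correct and follows essentially the same line as the paper's: parts (1) and (3) are identical in substance, and your closure identity $\overline{\{L\}}=V(L)$ in (2) is just a clean repackaging of the paper's direct verification that no open set can separate $L'$ from $L$ when $L'\subsetneq L$. You actually go slightly further than the paper, which omits any justification of the ``maximal $sh$-ideal'' clause in (2).
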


\begin{proof}
	(1) Assume that  $L_1\neq L_2$, so $L_1\nsubseteq L_2$ or $L_2\nsubseteq L_1$. If $L_1\nsubseteq L_2$, then  $L_1\in V^c(L_2)$ while $L_2\not\in V^c(L_2)$. Similarlly, if $L_2\nsubseteq L_1$, then  $L_2\in  V^c(L_1)$ while $L_1\notin V^c(L_1)$  and so $\mathcal X$ is  a $T_0$-space.\\
	(2)
	Suppose  that $\mathcal X$  is a  $T_1$-space and $L$ and $L'$ are non-zero $sh$-ideals of $R$. If $L' \subseteq L$, then  any open set  $G=V^c(I)$  that contains $L'$ will also contains $L$. Because,  $L\notin G$  implies that $L\subseteq I$ and so $L'\subseteq I$, the contradiction required.
	Conversly, let any $sh$-ideal of $R$ be a minimal $sh$-ideal, then for different $sh$-ideals of  $L_1, L_2$, we can easy to see that $L_1\in V^c(L_2)$ and  $L_2\notin V^c(L_2)$ and vice versa. This means that $\mathcal X$ is a $T_1$-space.\\
	(3) If $R$ has at most one $sh$-ideal, we are done. Now, let $L_1 \ne L_2$ be two $sh$-ideals of $R$. Since $\mathcal X$ is Hausdorff, then there exist ideals $I_1$ and $I_2$ such that, $L_1 \subseteq V^c(I_1)$ and $L_2 \subseteq V^c(I_2)$ and $V^c(I_1) \cap V^c(I_2) = \emptyset$. This implies that $ \mathcal X = V(I_1) \cup V(I_2) = V(I_1+I_2)$, hence every $sh$-ideal of $R$ is contained either in $I_1$ or $I_2$.
\end{proof}
 
\begin{pro}
	The following statements are equivalent for any ring $R$.
	\begin{enumerate}
		\item $R$ has $dcc$ on $sh$-ideals.
		\item $R$ has $dcc$ on finite sum of non-comparable $sh$-ideals. 
	\end{enumerate}
\end{pro}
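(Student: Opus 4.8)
The implication $(2)\Rightarrow(1)$ is immediate, since a single $sh$-ideal is a one–term finite sum of non-comparable $sh$-ideals, so any descending chain condition as in $(2)$ restricts at once to $dcc$ on $sh$-ideals; all the work lies in $(1)\Rightarrow(2)$. The argument I would give rests on one structural remark about the set $\mathcal F$ of all finite sums of pairwise non-comparable $sh$-ideals of $R$. Iterating the defining property of an $sh$-ideal shows that an $sh$-ideal is contained in $S=L_1+\cdots+L_n\in\mathcal F$ exactly when it is contained in some $L_i$; hence, after discarding redundant summands, the summands of $S$ are precisely the maximal members of $V(S)$, a finite set intrinsic to $S$ (independent of the chosen representation) which I will denote $\Lambda(S)$, with $S=\sum_{L\in\Lambda(S)}L$. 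From this I would record, for $S\supsetneq T$ in $\mathcal F$: every member of $\Lambda(T)$ is contained in some member of $\Lambda(S)$ (apply the $sh$-property to $T\subseteq S$); at least one member of $\Lambda(S)$ is \emph{lost}, i.e. lies outside $\Lambda(T)$, for otherwise $S=\sum\Lambda(S)\subseteq T$; every member of $\Lambda(T)\setminus\Lambda(S)$ is \emph{strictly} contained in some lost member of $\Lambda(S)$ (it sits in some $L'\in\Lambda(S)$ by the first fact, it is $\neq L'$ since it lies outside $\Lambda(S)$, and $L'$ cannot lie in $\Lambda(T)$ without violating the antichain property of $\Lambda(T)$); and $\Lambda(S)=\Lambda(T)$ forces $S=T$.

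Assume now $(1)$ holds but $\mathcal F$ violates $dcc$. Since $R$ has $dcc$ on $sh$-ideals, Theorem \ref{t1}(1) gives that the filtration $\mathcal Y_{-1}\subseteq\mathcal Y_0\subseteq\mathcal Y_1\subseteq\cdots$ of Definition \ref{d3} exhausts $\mathcal{SH}(R)$, so I may assign to each $sh$-ideal $L$ the rank $r(L)$ equal to the least $\beta$ with $L\in\mathcal Y_\beta$, noting that $M\subsetneq L$ among $sh$-ideals forces $r(M)<r(L)$. To each nonzero $S\in\mathcal F$ attach the invariant $\tau(S)=(\rho(S),m(S))$, where $\rho(S)=\max\{r(L):L\in\Lambda(S)\}$ and $m(S)=\#\{L\in\Lambda(S):r(L)=\rho(S)\}$, and order such pairs lexicographically — a well-order on a subclass of $\mathrm{Ord}\times\mathbb N$. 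I would then choose an infinite strictly descending chain $S_1\supsetneq S_2\supsetneq\cdots$ in $\mathcal F$ with $\tau(S_1)=(\rho,m)$ as small as possible. The first step is to show that $\rho(S_k)=\rho$ for all $k$ and that the set $\mathcal B$ of rank-$\rho$ members of $\Lambda(S_k)$ is one and the same (nonempty) set for every $k$: since $S_k\subseteq S_1$ every summand of $S_k$ has rank $\le\rho$, and a rank-$\rho$ summand of $S_{k+1}$, being contained in a summand of $S_k$, must in fact \emph{equal} a rank-$\rho$ summand of $S_k$; thus these finite sets decrease along the chain, and feeding the tail chain $(S_k,S_{k+1},\dots)$ into the minimality of $\tau(S_1)$ shows they cannot shrink.

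The decisive step — which I expect to be the one genuine obstacle — is to peel $\mathcal B$ off and descend. Put $S_k'=\sum\bigl(\Lambda(S_k)\setminus\mathcal B\bigr)\in\mathcal F$, a finite sum of non-comparable $sh$-ideals all of rank $<\rho$. If every $S_k'$ were $0$ then $\Lambda(S_k)=\mathcal B$ for all $k$, so $S_k$ would be constant, against strictness; hence $S_1'\neq0$ and $\tau(S_1')<_{\mathrm{lex}}\tau(S_1)$ because $\rho(S_1')<\rho$. It then suffices to check that $(S_k')_k$ is again strictly descending, which contradicts the minimality of $\tau(S_1)$ and completes $(1)\Rightarrow(2)$. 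For the containment $S_{k+1}'\subseteq S_k'$: a summand $M$ of $S_{k+1}$ with $r(M)<\rho$ lies in some summand $L'$ of $S_k$; if $L'\in\mathcal B$ then $M\subsetneq L'$, and since $\mathcal B\subseteq\Lambda(S_{k+1})$ gives $L'\subseteq S_{k+1}$, this contradicts the maximality of $M$ in $V(S_{k+1})$ — so $L'\notin\mathcal B$ and $M\subseteq S_k'$, and summing over such $M$ yields the inclusion. Strictness follows because $S_k=\bigl(\sum\mathcal B\bigr)+S_k'$, so $S_k'=S_{k+1}'$ would give $S_k=S_{k+1}$. The crux is precisely this last inclusion: one must use that the summands of a member of $\mathcal F$ are the maximal $sh$-ideals it contains, so that a lower-rank summand of $S_{k+1}$ cannot hide beneath a frozen top-rank summand; this is exactly what makes the lexicographic induction close.
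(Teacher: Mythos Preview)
Your argument is correct, with one small slip in exposition: from ``not every $S_k'$ is zero'' you conclude ``$S_1'\neq0$'', which does not follow as stated. In fact \emph{every} $S_k'\neq0$: if $S_k'=0$ then $\Lambda(S_k)=\mathcal B$, so $S_k=\sum\mathcal B\subseteq S_{k+1}$ (because $\mathcal B\subseteq\Lambda(S_{k+1})$), contradicting $S_{k+1}\subsetneq S_k$. With that patched, the minimal-counterexample descent closes exactly as you describe.

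Your route, however, is genuinely different from the paper's. The paper handles $(1)\Rightarrow(2)$ by a direct K\"onig-type extraction: given a non-stabilizing chain $I_n=\sum\Delta_n$, it prunes the finite antichains $\Delta_n$ so that consecutive ones are disjoint and each member of $\Delta_n$ strictly contains some member of $\Delta_{n+1}$, and then simply reads off a strictly descending chain $L_1\supsetneq L_2\supsetneq\cdots$ with $L_i\in\Delta_i$. You instead invoke Theorem~\ref{t1}(1) to equip every $sh$-ideal with an ordinal rank via the filtration of Definition~\ref{d3}, attach the lexicographic invariant $(\rho,m)$ to each finite antichain sum, and run a minimal-counterexample argument that freezes the top-rank summands $\mathcal B$ and peels them off to produce a strictly smaller witness. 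The paper's approach is shorter and needs no auxiliary rank function; what your approach buys is that the ``without loss of generality'' reductions on the $\Delta_n$ --- which the paper leaves quite terse --- are replaced by a fully explicit structural analysis, and your observation that $\Lambda(S)$ is intrinsic (the maximal members of $V(S)$) is a clean way to organize the combinatorics that the paper does not isolate.
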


\begin{proof}
	$(2)\Rightarrow (1)$ It is evident.\\
	$(1)\Rightarrow (2)$ Let $I_1 \supseteq I_2 \supseteq \dots \supseteq I_n \supseteq \dots $ be an infinite descending chain of ideals, each of which is  of the form $I_n = \varSigma_{L_i \in \Delta_n} L_i$, where $\Delta_n$ is a finite set of non-comparable $sh$-ideals. With out loss of generality, we can assume that $(i)$ $\Delta_n \cap \Delta_{n+1} = \emptyset$ (note, if $L \in \Delta_n \cap \Delta_{n+1}$, then since $\Delta_{n+1}$ is a set of non-comparable $sh$-ideals, then $L$ may not contain any other $sh$-ideal and we can remove it) and also $(ii)$ For every $L \in \Delta_n$, there exists $L' \in \Delta_{n+1}$ such that $L \supsetneq L'$ (note, otherwise $L'$ can be omitted). Hence,  we get an infinite chain $L_1 \supsetneq L_2 \supsetneq \dots \supsetneq L_n \supsetneq \dots$ of $sh$-ideals where $L_i \in \Delta_i$ for each $i$, this is a contradiction.
	\end{proof}
	
	\begin{coro}
		Let $R$ be a $D$-ring with dual classical Krull dimension. Then, $R$ has $dcc$ on finite sum of non-comparable $sh$-ideals.
	\end{coro}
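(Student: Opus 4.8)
The proof I would give is a short deduction from two results already in place. First, I would invoke Theorem \ref{t1}(2): since $R$ is a $D$-ring that possesses dual-classical Krull dimension, that part of the theorem applies verbatim and tells us that $R$ satisfies $dcc$ on $sh$-ideals. This is the step that uses the distributivity of the ideal lattice, and it is the only place the $D$-ring hypothesis actually enters.

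Next, I would feed this conclusion into the Proposition established just above, which asserts, for \emph{any} ring, the equivalence of ``$dcc$ on $sh$-ideals'' and ``$dcc$ on finite sums of non-comparable $sh$-ideals''. Applying the implication $(1)\Rightarrow(2)$ of that Proposition to our $R$ immediately gives that $R$ has $dcc$ on finite sums of non-comparable $sh$-ideals, which is exactly the assertion of the corollary.

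I do not anticipate any genuine obstacle here: the statement is a purely formal consequence obtained by composing Theorem \ref{t1}(2) with the preceding Proposition. The only point requiring a moment's attention is checking that the hypotheses of Theorem \ref{t1}(2) are met precisely as stated — namely that $R$ is a $D$-ring and that $R$ has dual-classical Krull dimension — and both of these are given outright in the hypotheses of the corollary, so no additional argument is needed.
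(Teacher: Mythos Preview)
Your proof is correct and is exactly the argument the paper intends: the corollary is stated without proof immediately after the Proposition, and the only way to deduce it is precisely by combining Theorem~\ref{t1}(2) with the implication $(1)\Rightarrow(2)$ of that Proposition, as you do.
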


	Recall that, a topological space $X$  is called Noetherian if it satisfies the ascending chain condition for open subsets.
	
\begin{pro}
	Let $R$ be a ring and $\mathcal X = \mathcal{X}(R)$ be  with $SH$-topology. The following statements are equivalent.
	\begin{enumerate}
		\item $\mathcal X$ is Noetherian.
		\item Every subset of $\mathcal X$ is quasi-compact.
		\item $R$ has $dcc$ on semi-$sh$-ideals.
	\end{enumerate} 
\end{pro}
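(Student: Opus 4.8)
The plan is to treat the purely topological equivalence (1) $\Leftrightarrow$ (2) with the standard arguments relating Noetherianness to quasi-compactness, and to obtain (1) $\Leftrightarrow$ (3) through an order-theoretic dictionary between the closed subsets of $\mathcal X$ and the semi-$sh$-ideals of $R$ supplied by Lemma \ref{l1}.

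First I would record what is essentially a reformulation of Lemma \ref{l1}: the assignment $I \mapsto V(I)$, restricted to the semi-$sh$-ideals of $R$, is an inclusion-preserving bijection onto the family of closed subsets of $\mathcal X$, with inclusion-preserving inverse $V(I) \mapsto \underline I$. Here one uses that $\underline I$ is always a semi-$sh$-ideal — the $sh$-ideals contained in $\underline I$ are precisely those contained in $I$, since $\underline I \subseteq I$ — together with the identities $V(I) = V(\underline I)$, ``$I = \underline I$ iff $I$ is a semi-$sh$-ideal'', and ``$V(I) = V(J)$ implies $\underline I = \underline J$'' from Lemma \ref{l1}, which make the two maps mutually inverse. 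That both maps preserve inclusions is immediate: $I \subseteq J$ gives $V(I) \subseteq V(J)$, while $V(I) \subseteq V(J)$ gives $\underline I = \Sigma_{L \in V(I)} L \subseteq \Sigma_{L \in V(J)} L = \underline J$. Hence strictly descending chains of closed subsets of $\mathcal X$ correspond bijectively to strictly descending chains of semi-$sh$-ideals of $R$; since a topological space is Noetherian exactly when it satisfies $dcc$ on closed sets, this proves (1) $\Leftrightarrow$ (3).

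For (1) $\Rightarrow$ (2), I would use that a Noetherian space satisfies the maximal condition on open sets. Given $Y \subseteq \mathcal X$ and an open cover $\{G_\alpha \cap Y\}_\alpha$ with each $G_\alpha$ open in $\mathcal X$, take a maximal element $G = G_{\alpha_1} \cup \dots \cup G_{\alpha_n}$ of the (nonempty) family of all finite unions of the $G_\alpha$; maximality forces $G_\beta \subseteq G$ for every $\beta$, so $\bigcup_\alpha G_\alpha = G$, and intersecting with $Y$ gives the finite subcover $\{G_{\alpha_i} \cap Y\}_{i=1}^n$. Thus every subset of $\mathcal X$ is quasi-compact. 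For (2) $\Rightarrow$ (1), suppose $\mathcal X$ is not Noetherian and fix a strictly ascending chain $G_1 \subsetneq G_2 \subsetneq \cdots$ of open sets; set $G = \bigcup_n G_n$. Regarding $G$ as a subset of $\mathcal X$, it is quasi-compact by (2), and $\{G_n\}_n$ is an open cover of it, so finitely many $G_n$ cover $G$, whence $G = G_N$ for $N$ the largest index used — contradicting $G_N \subsetneq G_{N+1} \subseteq G$. Hence $\mathcal X$ is Noetherian.

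I do not expect a genuine obstacle: the topological implications are the standard facts, and the only step needing real care is verifying that the correspondence of the second paragraph — and especially its inverse $V(I) \mapsto \underline I$ — is well defined and inclusion-preserving, which is exactly what the relevant parts of Lemma \ref{l1} are for. One could alternatively quote the textbook statement ``a space is Noetherian iff every subspace is quasi-compact'', but writing out the two short arguments keeps the section self-contained.
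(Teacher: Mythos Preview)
Your argument is correct. The organization differs from the paper's: the paper runs the cycle $(1)\Rightarrow(2)\Rightarrow(3)\Rightarrow(1)$, passing from a chain of semi-$sh$-ideals to a chain of closed sets (and back) inside the individual implications, whereas you first isolate the order-isomorphism ``semi-$sh$-ideals $\longleftrightarrow$ closed subsets of $\mathcal X$'' (via $I\mapsto V(I)$ and its inverse $V(I)\mapsto\underline I$) to get $(1)\Leftrightarrow(3)$ in one stroke, and then handle $(1)\Leftrightarrow(2)$ by the standard purely topological facts. The underlying ingredients are the same --- the identities $V(I)=V(\underline I)$, $I=\underline I$ for semi-$sh$-ideals, and $V(I)=V(J)\Rightarrow\underline I=\underline J$ from Lemma~\ref{l1} --- but your packaging makes the reason for $(1)\Leftrightarrow(3)$ more transparent, while the paper's cycle avoids stating the bijection explicitly and instead links $(2)$ to $(3)$ directly.
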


\begin{proof}
	$(1)\Rightarrow (2)$ Let $A$ be a subset of  $ \mathcal X$ and  $\{O_\lambda\}$ be an open cover for $A$. Then $A \subseteq \cup_\lambda O_\lambda$. If $\{O_\lambda\}$ has not a finite subcover of $A$, then there exists a sequence $\lambda_1, \lambda_2, \lambda_3, \dots$ such that $O_{\lambda_1} \subsetneq O_{\lambda_1} \cup O_{\lambda_2} \subsetneq \dots \subsetneq \cup_{i=1}^n O_{\lambda_i}  \subsetneq \dots$, which is a contradiction.\\
	$(2)\Rightarrow (3)$ Let
	$I_1\supseteq I_2 \supseteq \dots \supseteq I_n \supseteq \dots$
	be an infinite descending chain of semi-$sh$- ideals, each of which is of the form $I_n = \varSigma \Delta_n$, where $\Delta_n \subseteq \mathcal X$. Then
	$V(I_1) \supseteq V(I_2) \supseteq \dots \supseteq V(I_n) \supseteq \dots$
	and so 	$\mathcal X - V(I_1) \subseteq \mathcal X - V(I_2) \subseteq \dots \subseteq \mathcal {Y}- V(I_n) \subseteq \dots$.
	Let $A = \cup (\mathcal X- V(I_n))$. Then by hypothesis, there exists $i_1, \dots i_r \in	\mathbb N$ such that
	$ A = \cup_{j=1}^r (\mathcal X- V (I_{i_i}))$ and so  $A = \mathcal {Y} - V (I_m)$,
	where $m = max\{i_1, \dots , i_r\}$. It follows that $V (I_m) = V (I_k)$ for all
	$k \geq m$. Since $I_m$ and $I_k$ are semi-$sh$-ideals, we have  
	$I_m = V (I_m) = V (I_k) = I_k$
	for all $k \geq  m$ and hence we are done.\\
	$(3)\Rightarrow (1)$ Let 
	$V(I_1) \supseteq V(I_2) \supseteq \dots \supseteq V(I_n) \supseteq \dots$ is 
	 an infinite descending chain of closed subsets of $\mathcal Y$. Hence we have
	 $\varSigma V(I_1) \supseteq \varSigma V(I_2) \supseteq \dots \supseteq \varSigma V(I_n) \supseteq \dots$.
	  and so $\underline{I_1} \supseteq \underline{I_2} \supseteq \dots \supseteq \underline{I_n} \supseteq \dots$. 
	 By $(3)$,  there exists $m \in \mathbb N$ such that $\underline{I_m} = \underline{I_k}$, thus
	 $\varSigma V(I_m) = \varSigma V(I_k)$ for all $k \geq m$. Hence $V(I_m) = V(I_k)$ for all $k \geq m$. 
	
\end{proof}

\begin{coro}
Let $R$ be a ring and $\mathcal X = \mathcal X(R)$ be with $SH$-topology. If $\mathcal X$ is quasi-compact, then $R$ has dual-classical-Krull dimension.
\end{coro}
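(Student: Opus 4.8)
The plan is to deduce the statement from Theorem \ref{t1}(1): it suffices to prove that quasi-compactness of $\mathcal X$ forces $R$ to satisfy $dcc$ on $sh$-ideals, for then $R$ has dual-classical Krull dimension by Theorem \ref{t1}(1). I would run essentially the chain-of-open-sets argument used in the proof of the preceding Proposition, but with a descending chain of $sh$-ideals in place of a descending chain of semi-$sh$-ideals, exploiting that every $sh$-ideal $L$ satisfies $L=\underline{L}$ (Lemma \ref{l1}(7)) together with the dictionary between the closed sets $V(I)$ and the operator $I\mapsto\underline{I}$ furnished by Lemma \ref{l1}.

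Concretely, assume for contradiction that $L_1\supsetneq L_2\supsetneq\cdots\supsetneq L_n\supsetneq\cdots$ is an infinite strictly descending chain of $sh$-ideals of $R$. Then $V(L_1)\supseteq V(L_2)\supseteq\cdots$, and since $L_n\in V(L_n)$ while $L_n\notin V(L_{n+1})$ (as $L_n\nsubseteq L_{n+1}$), these inclusions are strict; taking complements, the open sets $O_n:=\mathcal X\setminus V(L_n)$ form a strictly ascending chain $O_1\subsetneq O_2\subsetneq\cdots$. Set $A:=\bigcup_n O_n$; this is an open subset of $\mathcal X$, and $\{O_n\}_n$ is an open cover of $A$ with no finite subcover. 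Once $A$ is known to be quasi-compact, a finite subcover forces $A=O_m$ for some $m$, whence $V(L_m)=V(L_k)$ for every $k\ge m$; then Lemma \ref{l1}(7) and (8) give $L_m=\underline{L_m}=\underline{L_k}=L_k$ for all $k\ge m$, contradicting strict descent. Hence $R$ has $dcc$ on $sh$-ideals, and Theorem \ref{t1}(1) completes the proof.

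The point I expect to be the main obstacle is the passage from quasi-compactness of $\mathcal X$ to quasi-compactness of the open subset $A=\bigcup_n O_n=\mathcal X\setminus V(\bigcap_n L_n)$, since an open subspace of a quasi-compact space need not be quasi-compact in general. The transparent sub-case is when $\bigcap_n L_n$ contains no non-zero $sh$-ideal: then $\bigcap_n V(L_n)=V(\bigcap_n L_n)=\emptyset$ by Lemma \ref{l1}(5) and (1), so $A=\mathcal X$ is quasi-compact by hypothesis and the argument above closes at once. It therefore remains to reduce to this sub-case — for instance by trimming the given chain, or by covering the residual closed set $V(\bigcap_n L_n)$ (which is quasi-compact, being a closed subset of the quasi-compact space $\mathcal X$) and splicing that cover with the $O_n$ — after which $dcc$ on $sh$-ideals follows and Theorem \ref{t1}(1) gives that $R$ has dual-classical Krull dimension.
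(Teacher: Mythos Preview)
Your overall strategy—deduce $dcc$ on $sh$-ideals from quasi-compactness and then invoke Theorem~\ref{t1}(1)—is exactly the paper's route: the paper writes ``Since $\mathcal X$ is quasi-compact, then $R$ has $dcc$ on semi-$sh$-ideals, by the part $(3)$ of the previous proposition,'' then passes to $dcc$ on $sh$-ideals and applies Theorem~\ref{t1}. So in spirit the two arguments coincide; you simply bypass semi-$sh$-ideals and work with $sh$-ideals directly.

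The gap you flag, however, is genuine—and the paper's proof does not resolve it either. The preceding proposition yields $dcc$ on semi-$sh$-ideals only under the hypothesis that \emph{every subset} of $\mathcal X$ is quasi-compact (equivalently, that $\mathcal X$ is Noetherian); its proof of $(2)\Rightarrow(3)$ uses quasi-compactness of the open set $A=\bigcup_n\bigl(\mathcal X\setminus V(I_n)\bigr)$, not of $\mathcal X$. The corollary assumes only that $\mathcal X$ is quasi-compact, so the paper's appeal to ``part $(3)$'' is at best elliptical and at worst the very gap you isolate. Your proposed repairs do not close it. For the ``splicing'' idea, observe that any open set $V^c(I)$ meeting $C=V\bigl(\bigcap_n L_n\bigr)$ must already contain every $L_n$: if $L\in C\cap V^c(I)$ then $L\subseteq\bigcap_n L_n$ and $L\nsubseteq I$, so $L_m\subseteq I$ for some $m$ would force $L\subseteq L_m\subseteq I$, a contradiction. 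Thus once you add any such set to the cover and extract a finite subcover of $\mathcal X$, that single set swallows all the witnesses $L_n$, and you obtain no contradiction with the strict descent of the chain. ``Trimming the chain'' is likewise only a hope: nothing forces $\bigcap_n L_n$ to have trivial $sh$-content. In short, your proposal and the paper's proof share the same unresolved step; as written, neither argument establishes the corollary from quasi-compactness of $\mathcal X$ alone.
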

\begin{proof}
	Since $\mathcal X$ is quasi-compact, then $R$ has $dcc$ on semi-$sh$-ideals, by the part $(3)$ of the previous proposition. Thus, it also has $dcc$ on $sh$-ideals and by Theorem \ref{t1}, it has dual-classical Krull dimension. 
\end{proof}

\section{derived dimension versus dual-classical Krull dimension}
Recall that if $A$ is a subset of a topological space $X$, then an element $x \in X$ is called a limit point for $A$ if every open set containing $A$ intersects $A$ in at least one point of $A$ distinct of $x$. The set of all limit points of $A$ is called the drived set of $A$ and is denoted by $A'$. Every $x \in A-A'$ is called an isolated point of $A$. Also, the $\alpha$-derivative of $X$ is defined by transfinite induction as follows: $X_0= X$ and $X_{\alpha+1} = X'_\alpha $ and if $\alpha$ is a limit ordinal, $X_\alpha = \bigcap _{\beta < \alpha}X_\beta$. Note that, the sets $X_\alpha$ need not to be closed in general, however, in case $X$ is hausdorff, every $X_\alpha$ is a closed. $X$ is called scattered, if $X_\alpha = \emptyset$ for some ordinal $\alpha$. If $X$ is scattered, then the smallest ordinal $\alpha$ sucah that $X_\alpha = \emptyset$ is called the derived dimension of $X$ and is denoted by $d(X) = \alpha$, see \cite{kar}. \\

Let $R$ be a ring and $\mathcal B = \{V(I): I ~\text{is an ideal of}~R\}$. For each $L \in \mathcal X$, clearly $L \in V(L)$, so $\mathcal X = \cup \mathcal B$. Also, by Lemma \ref{l1}(2), we have $V(I_1 \cap I_2) = V(I_1) \cap V(I_2)$. Hence,  $\mathcal B$ is a base for some toplology on $\mathcal X = \mathcal X(R)= \mathcal{SH}(R)-\{0\}$. The tpolology on $\mathcal X $ with $\mathcal B$ as a base, is called $W$-topology.

\begin{lem}\label{l3.1}
	Let $R$ be a ring, $\mathcal X = \mathcal X(R)$ be with $W$-topology and $S \subseteq \mathcal X$. Then an element $L \in S$ is an isolated point of $S$ if and only if it is a minimal element of $S$.
\end{lem}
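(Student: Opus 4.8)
The plan is to unwind the definition of an isolated point using the fact that $\mathcal B = \{V(I) : I \text{ an ideal of } R\}$ is a base for the $W$-topology, so every open set is a union of sets of the form $V(I)$. Consequently, $L$ is an isolated point of $S$ if and only if there is an ideal $I$ of $R$ with $L \in V(I)$ and $V(I) \cap S = \{L\}$: indeed, $L$ fails to be a limit point of $S$ exactly when some open neighbourhood of $L$ meets $S$ only in $L$, and such a neighbourhood can always be shrunk to a basic one $V(I)$ with $L \in V(I)$, after which $V(I) \cap S = \{L\}$ still holds. I would record this reduction as the first step.

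For the forward implication, suppose $L$ is isolated in $S$ and pick such an ideal $I$, so $L \subseteq I$ and $V(I) \cap S = \{L\}$. If $L' \in S$ satisfies $L' \subseteq L$, then $L' \subseteq I$, hence $L' \in V(I) \cap S = \{L\}$, which forces $L' = L$. Thus no member of $S$ is strictly contained in $L$, i.e. $L$ is a minimal element of $S$.

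For the converse, suppose $L$ is a minimal element of $S$. Since $L$ is itself an ideal of $R$, the set $V(L) \in \mathcal B$ is a basic open neighbourhood of $L$ (because $L \subseteq L$ forces $L \in V(L)$). If $L' \in V(L) \cap S$, then $L' \in S$ and $L' \subseteq L$, so minimality of $L$ gives $L' = L$; hence $V(L) \cap S = \{L\}$ and $L$ is an isolated point of $S$. Combining the two implications yields the statement.

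I do not anticipate a serious obstacle here; the two points requiring care are (i) that being isolated may be tested on a \emph{basic} open set rather than an arbitrary one — which is precisely why passing to the base $\mathcal B$ is convenient — and (ii) that the $sh$-ideal $L$ is a legitimate ideal to feed into $V(\cdot)$, so that $V(L)$ is genuinely available as a neighbourhood of $L$ in the converse direction.
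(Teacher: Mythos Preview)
Your proof is correct and follows essentially the same approach as the paper: both directions use the basic open sets $V(I)$, with the key observation that $V(L)$ itself witnesses isolation when $L$ is minimal, and that any basic neighbourhood $V(I)$ witnessing isolation forces minimality since $L'\subseteq L\subseteq I$ implies $L'\in V(I)\cap S$. The only cosmetic difference is that you spell out the reduction to basic open sets explicitly at the outset, whereas the paper invokes it mid-argument.
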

\begin{proof}
	If $L \in S$ is a minimal element, then $V(L) \cap S= \{L\}$, hence $L$ is an isolated point of $S$. Conversely, let $L \in S$ be an isolated point. Then, there exists an open subset $G$ of $\mathcal X$ such that $G\cap S = \{L\}$. But there exists $V(L')$ such that $L \in V(L') \subseteq G$. This implies that $V(L') \cap S = \{L\}$. Now, let $L'' \in S$ and $L'' \subseteq L$. Then $L'' \in V(L') \cap S = \{L\}$ and so $L'' = L$. Thus, $L$ is a minimal element of $S$. 
\end{proof}


\begin{coro}\label{ipo}
Let $R$ be a ring, $\mathcal X = \mathcal X(R)$ be with $W$-topology and $S \subseteq \mathcal X$. The set of all isolated points of $S$ is open.
\end{coro}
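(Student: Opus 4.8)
The plan is to read off the statement from the order structure underlying the $W$-topology, together with Lemma~\ref{l3.1}. The first point to isolate is that each $L\in\mathcal X$ has a \emph{smallest} open neighbourhood, namely $V(L)=\{L'\in\mathcal X:L'\subseteq L\}$: it belongs to the base $\mathcal B$, and any basic open set $V(I)$ with $L\in V(I)$ satisfies $L\subseteq I$, whence $V(L)\subseteq V(I)$. It follows that a subset of $\mathcal X$ is open exactly when it is closed downward under inclusion, and --- by Lemma~\ref{l3.1} --- that the set $I(S)$ of isolated points of $S$ coincides with the set of inclusion-minimal elements of $S$.

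The step that actually uses the $W$-topology is to show that the derived set $S'$ is closed, equivalently that $\mathcal X\setminus S'$ is downward closed. So let $L\notin S'$, i.e.\ $V(L)\cap(S\setminus\{L\})=\emptyset$, and let $M\in\mathcal X$ with $M\subseteq L$. Since $M\subseteq L$ we have $V(M)\subseteq V(L)$, hence $V(M)\cap S\subseteq V(L)\cap S\subseteq\{L\}$; and as $L\in V(M)$ would force $L=M$, a two-line case distinction gives $V(M)\cap(S\setminus\{M\})=\emptyset$, so $M\notin S'$. Thus $\mathcal X\setminus S'$ is open, i.e.\ $S'$ is closed in $\mathcal X$ --- the dual of the corresponding fact in \cite{kar}, and a property that fails in a general topological space.

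Finally, $I(S)=S\setminus S'=S\cap(\mathcal X\setminus S')$, so $I(S)$ is open. (Alternatively, and more directly, $G=\bigcup_{L\in I(S)}V(L)$ is open in $\mathcal X$ and $G\cap S=I(S)$, because minimality of $L$ in $S$ gives $V(L)\cap S=\{L\}$ as in the proof of Lemma~\ref{l3.1}.) I do not expect a serious obstacle here: once the minimal-neighbourhood description of the $W$-topology and Lemma~\ref{l3.1} are in hand, the only care needed is the downward-closedness check for $\mathcal X\setminus S'$, which is the routine two-case argument indicated above.
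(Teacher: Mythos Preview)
Your argument is correct. The parenthetical ``alternative'' you give at the end is essentially the paper's entire proof: for an isolated $L\in S$, Lemma~\ref{l3.1} says $L$ is minimal in $S$, so $V(L)\cap S=\{L\}$; since $V(L)$ is basic open, this exhibits $\{L\}$ as open in $S$, and hence $I(S)$ is open in the subspace $S$.

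Your main route is different and yields more: you show that the derived set $S'$ is closed in the ambient space $\mathcal X$ (by checking that $\mathcal X\setminus S'$ is downward closed). The paper never isolates this fact, yet it is precisely what is used immediately after the corollary to prove that each $\mathcal X_\beta=\mathcal X_{\beta-1}'$ is closed in $\mathcal X$; your argument delivers that conclusion in one step, whereas the paper combines ``$I(S)$ open in $S$'' with the inductive hypothesis that $\mathcal X_{\beta-1}$ is already closed. One clarification worth making explicit: in both of your deductions the conclusion is that $I(S)$ is open in $S$, not in $\mathcal X$ --- the latter reading is false in general (take $S=\{L\}$ with $L$ not minimal in $\mathcal X$), so the corollary must be read in the subspace sense.
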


\begin{proof}
	Suppose that $L \in S$. By the previous lemma, $L$  is a minimal element of $S$ and  then $V(L) = \{L\}\subseteq S$. According to  $W$-topology,  $V(L)$  is open and so $S$ is a open set.
\end{proof}

Let $R$ be a ring and $\mathcal X = \mathcal X(R)$. We set $\mathcal X_0 = \mathcal X$ and  for every  $\beta$, by transfinite induction, we define $\mathcal X_{\beta + 1} = \mathcal X'_\beta$,  the set of limit points of $\mathcal X_\beta$ and $\mathcal X_\beta = \cap_{\gamma < \beta} \mathcal X_\gamma$, for a limit ordinal $\beta$. Note that, $\mathcal X_0 \supseteq \mathcal X_1 \supseteq \dots \supseteq \mathcal X_n \supseteq \dots$. To see this, it sufficies to show that every $\mathcal X_\beta$ is a closed set. For this manner, we procced by transfinite induction on $\beta$. Clearly, $\mathcal X_0 = \mathcal X$ is closed. Now, let $\mathcal X_\gamma$ be closed for every  $\gamma < \beta$. If $\beta = \gamma + 1$, then $\mathcal X_\beta = \mathcal X_\gamma - \mathcal S_\gamma = \mathcal X_\gamma \cap \mathcal S_\gamma^c $, where $\mathcal S_\gamma$ is the set of all isolated points of $\mathcal X_\gamma$ which is open, by Corollary \ref{ipo}. Hence, $\mathcal S_\gamma^c$ is closed and by induction hypothesis, so is $\mathcal X_\beta$. If $\beta$ is a limit ordinal, since the intersection of any family of closed sets is closed, $\mathcal X_\beta = \cap_{\gamma < \beta} \mathcal X_\gamma$  is closed.\\

We cite the following well-known fact from \cite[Lemma 3]{kar}. 

\begin{lem}
	The following are equivalent for any toplogical space $X$.
	\begin{enumerate}
		\item Every nonempty subset of $X$ contains an isolated point.
		\item There is an ordinal $\alpha$ such that $X_\alpha = \emptyset$.
	\end{enumerate}
\end{lem}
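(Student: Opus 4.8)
The plan is to prove the two implications separately; in both directions the mechanism is the transfinite derived sequence $X_0\supseteq X_1\supseteq\cdots$ together with a rank function assigning to each point the stage at which it first becomes an isolated point.

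\emph{$(1)\Rightarrow(2)$.} I would first note that once $X_\beta=X_{\beta+1}$ the sequence is constant from that stage on (an easy transfinite induction, using $X_{\gamma+1}=X'_\gamma$ at successors and intersections at limits). Hence if no such $\beta$ existed the sequence would be strictly decreasing at every stage; fixing a cardinal $\kappa>|X|$ and choosing $x_\beta\in X_\beta\setminus X_{\beta+1}$ for each $\beta<\kappa$ would produce $\kappa$ pairwise distinct elements of $X$ (for $\beta<\beta'$ one has $x_{\beta'}\in X_{\beta'}\subseteq X_{\beta+1}$ while $x_\beta\notin X_{\beta+1}$), which is impossible. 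So there is an ordinal $\alpha$ with $X_\alpha=X_{\alpha+1}=X'_\alpha$, i.e. every point of $X_\alpha$ is a limit point of $X_\alpha$ and $X_\alpha$ has no isolated point. By $(1)$ this forces $X_\alpha=\emptyset$, which is $(2)$.

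\emph{$(2)\Rightarrow(1)$.} Assume $X_\alpha=\emptyset$ and let $\emptyset\neq S\subseteq X$. For $x\in S$ the set $\{\beta : x\notin X_\beta\}$ is nonempty (it contains $\alpha$) and does not contain $0$; its least element cannot be a limit ordinal, since at a limit $\beta$ one has $X_\beta=\bigcap_{\gamma<\beta}X_\gamma$, so $x\notin X_\beta$ would already force $x\notin X_\gamma$ for some $\gamma<\beta$. Write this least element as $r(x)+1$, so that $x\in X_{r(x)}$ but $x\notin X_{r(x)+1}=X'_{r(x)}$, i.e. $x$ is isolated in $X_{r(x)}$. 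Next I would pick $x_0\in S$ with $\mu:=r(x_0)$ minimal among $\{r(x):x\in S\}$ and choose an open $U$ with $U\cap X_\mu=\{x_0\}$. If $y\in U\cap S$ then $r(y)\geq\mu$ by minimality, hence $y\in X_{r(y)}\subseteq X_\mu$ and so $y\in U\cap X_\mu=\{x_0\}$, giving $y=x_0$. Thus $U\cap S=\{x_0\}$ and $x_0$ is an isolated point of $S$, which is $(1)$.

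I expect no real obstacle here: the content is ordinal bookkeeping. The two points needing care are (a) that the rank $r(x)$ is always a successor ordinal --- this is exactly where the limit clause $X_\beta=\bigcap_{\gamma<\beta}X_\gamma$ is used --- and (b) that ``isolated point of $S$'' is to be read in the subspace topology, i.e. $x\in S\setminus S'$ is equivalent to the existence of an open $U\subseteq X$ with $U\cap S=\{x\}$. With these in hand nothing about the $W$-topology or the structure of $\mathcal X$ enters, so the statement is a genuinely general topological fact; as it is quoted from \cite[Lemma 3]{kar}, one could equally well just cite it.
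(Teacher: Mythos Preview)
Your proof is correct. The paper itself gives no proof of this lemma---it is simply quoted from \cite[Lemma~3]{kar}, exactly as you observe in your closing remark---so there is nothing to compare against; you are supplying an argument where the paper has none. The proof you give is the standard Cantor--Bendixson rank argument and is sound in both directions.

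One small point worth making explicit: the monotonicity $X_0\supseteq X_1\supseteq\cdots$ that you invoke (in the cardinality argument for $(1)\Rightarrow(2)$ and, more essentially, when concluding $y\in X_{r(y)}\subseteq X_\mu$ in $(2)\Rightarrow(1)$) is not automatic for the derived-set operation on arbitrary subsets, since $A''\subseteq A'$ can fail in non-$T_1$ spaces. It does, however, hold for the sequence starting at $X_0=X$: one checks $X_{\beta+1}\subseteq X_\beta$ by transfinite induction, the successor step following from $A\subseteq B\Rightarrow A'\subseteq B'$. Since you already flag the other two delicate points, adding a line for this one would make the argument self-contained.
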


It follows by the above lemma that if every non-empty subset of $X$, which has an isolated point, then $X$ has derived dimension. The next result is now immediate.

\begin{coro}\label{c3.3}
	Let  $R$ be a ring and $\mathcal X = \mathcal X(R)$ be with $W$-toplology. Then the following statements are equivalent.
	\begin{enumerate}
		\item $R$ has $dcc$ on $sh$-ideals.
		\item  $\mathcal X$ has derived dimension.
	\end{enumerate}
	
\end{coro}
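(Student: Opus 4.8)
The plan is to read off the corollary directly from Lemma \ref{l3.1} and the cited fact \cite[Lemma 3]{kar}, after translating both conditions into a single order-theoretic statement about subsets of $\mathcal X$. Concretely, I would use the bridge: for the poset $(\mathcal X,\subseteq)$, the descending chain condition on $sh$-ideals is equivalent to saying that every non-empty subset $S\subseteq\mathcal X$ has a minimal element, and, by Lemma \ref{l3.1}, in the $W$-topology the minimal elements of $S$ are exactly its isolated points. So once this dictionary is in place, each implication is a one-line invocation of \cite[Lemma 3]{kar}.

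For $(1)\Rightarrow(2)$ I would assume $R$ has $dcc$ on $sh$-ideals and take an arbitrary non-empty $S\subseteq\mathcal X$. If $S$ had no minimal element one could recursively build a strictly descending chain $L_1\supsetneq L_2\supsetneq\cdots$ with all $L_i\in S$, contradicting the $dcc$; hence $S$ has a minimal element, which by Lemma \ref{l3.1} is an isolated point of $S$. Thus every non-empty subset of $\mathcal X$ contains an isolated point, and \cite[Lemma 3]{kar} produces an ordinal $\alpha$ with $\mathcal X_\alpha=\emptyset$, i.e.\ $\mathcal X$ has derived dimension.

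For $(2)\Rightarrow(1)$ I would argue contrapositively. If $R$ fails $dcc$ on $sh$-ideals there is an infinite strictly descending chain $L_1\supsetneq L_2\supsetneq\cdots$ of $sh$-ideals; since a strict descent cannot pass below $0$, every $L_i$ is non-zero, so $S=\{L_i:i\geq 1\}$ is a non-empty subset of $\mathcal X$. It has no minimal element, hence by Lemma \ref{l3.1} no isolated point, so by \cite[Lemma 3]{kar} there is no ordinal $\alpha$ with $\mathcal X_\alpha=\emptyset$; thus $\mathcal X$ is not scattered and has no derived dimension.

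There is essentially no obstacle here: the content is already packaged in Lemma \ref{l3.1} and \cite[Lemma 3]{kar}, and the whole proof is a chaining of these. The only points deserving a word of care are the classical equivalence between $dcc$ on a poset and the existence of a minimal element in every non-empty subset (which rests on dependent choice, used exactly in the recursive construction above) and the trivial remark that the terms of a strictly descending chain of $sh$-ideals are automatically non-zero and so genuinely lie in $\mathcal X$.
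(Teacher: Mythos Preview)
Your argument is correct and is exactly the unpacking of what the paper means by ``The next result is now immediate'': it combines Lemma~\ref{l3.1} (minimal elements of $S$ are precisely its isolated points in the $W$-topology) with \cite[Lemma~3]{kar} and the standard equivalence between $dcc$ and the existence of minimal elements. The paper gives no further proof, so your write-up simply makes explicit the intended one-line deduction.
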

 
 We need the following result, too.
 
\begin{theo}
	Let  $R$ be a ring and $\mathcal X = \mathcal X(R)$ be with $W$-toplology. If $\mathcal X$ has derived dimension, then $R$ has dual-classical Krull dimension. 
\end{theo}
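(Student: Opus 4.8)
The quickest route is to observe that the statement is already essentially in hand. By Corollary~\ref{c3.3}, ``$\mathcal X$ has derived dimension'' is equivalent to ``$R$ has $dcc$ on $sh$-ideals'', and by Theorem~\ref{t1}(1) the latter forces $R$ to have dual-classical Krull dimension; chaining the two gives the result. Under this plan there is nothing to prove beyond invoking these two facts, so the only thing to verify is that the equivalence in Corollary~\ref{c3.3} is being applied correctly — namely that scatteredness of $\mathcal X$ translates, through Lemma~\ref{l3.1} and \cite[Lemma~3]{kar}, precisely into the descending chain condition on $sh$-ideals.

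Because the later comparison of the two dimensions will want a numerical bound, I would instead prefer a direct transfinite induction. Write $\mathcal Y = \mathcal{SH}(R)$ with the filtration $\{\mathcal Y_\alpha\}$ of Definition~\ref{d3} (so $\mathcal Y_{-1} = \{0\}$ and $\mathcal Y = \mathcal X \cup \{0\}$), and let $\{\mathcal X_\alpha\}$ be the derived sets of $\mathcal X$ in the $W$-topology. The target is the inclusion
\[
  \mathcal X \setminus \mathcal X_\alpha \ \subseteq\ \mathcal Y_\alpha \qquad \text{for every ordinal } \alpha ,
\]
proved by induction on $\alpha$. The case $\alpha = 0$ is vacuous. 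For a limit ordinal $\alpha$: if $L \notin \mathcal X_\alpha = \bigcap_{\gamma < \alpha} \mathcal X_\gamma$ then $L \notin \mathcal X_\gamma$ for some $\gamma < \alpha$, and the induction hypothesis plus monotonicity of $\{\mathcal Y_\gamma\}$ place $L$ in $\mathcal Y_\gamma \subseteq \mathcal Y_\alpha$. For the successor step $\alpha \to \alpha+1$: take $L \in \mathcal X \setminus \mathcal X_{\alpha+1}$; if $L \notin \mathcal X_\alpha$ the induction hypothesis (together with $\mathcal Y_\alpha \subseteq \mathcal Y_{\alpha+1}$) finishes it, and otherwise $L \in \mathcal X_\alpha \setminus \mathcal X_{\alpha+1}$, which by the definition of the derived set together with Lemma~\ref{l3.1} says exactly that $L$ is an inclusion-minimal element of $\mathcal X_\alpha$. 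Then every $sh$-ideal $L'$ with $0 \ne L' \subsetneq L$ fails to lie in $\mathcal X_\alpha$, hence $L' \in \mathcal X \setminus \mathcal X_\alpha \subseteq \mathcal Y_\alpha$ by the induction hypothesis, while $0 \in \mathcal Y_{-1} \subseteq \mathcal Y_\alpha$; so every $sh$-ideal strictly contained in $L$ lies in $\mathcal Y_\alpha$ with $\alpha < \alpha + 1$, and Definition~\ref{d3} gives $L \in \mathcal Y_{\alpha+1}$.

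Granting this inclusion, the conclusion follows quickly: if $\mathcal X$ has derived dimension, say $\mathcal X_\delta = \emptyset$ with $\delta = d(\mathcal X)$, then $\mathcal X = \mathcal X \setminus \mathcal X_\delta \subseteq \mathcal Y_\delta$, and since $0 \in \mathcal Y_{-1} \subseteq \mathcal Y_\delta$ as well, $\mathcal Y = \mathcal Y_\delta$; thus $R$ has dual-classical Krull dimension, indeed $d.cl.\kdim\,R \le d(\mathcal X)$. I do not expect a serious obstacle anywhere. The one step that needs attention is the successor case, where one must recognize that the points removed at stage $\alpha$ are precisely the inclusion-minimal elements of $\mathcal X_\alpha$ (this is exactly the content of Lemma~\ref{l3.1}) and that the nestedness $\mathcal X_0 \supseteq \mathcal X_1 \supseteq \cdots$ is what makes ``$L' \notin \mathcal X_\alpha$'' the correct hypothesis to recycle through the induction; keeping track of the zero ideal — which is absent from $\mathcal X$ but present in every $\mathcal Y_\beta$ with $\beta \ge -1$ — is the only other point of care.
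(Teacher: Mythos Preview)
Your opening paragraph is exactly the paper's proof: invoke Corollary~\ref{c3.3} to pass from ``$\mathcal X$ has derived dimension'' to ``$R$ has $dcc$ on $sh$-ideals'', then Theorem~\ref{t1}(1) to conclude.

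The direct transfinite induction you offer as an alternative is also correct, and it is essentially one half of the paper's later Proposition~\ref{p3.4}: since $\mathcal X\setminus\mathcal X_\alpha=\bigcup_{\beta<\alpha}\mathcal S_\beta$, your inclusion $\mathcal X\setminus\mathcal X_\alpha\subseteq\mathcal Y_\alpha$ is precisely the containment $\bigcup_{\beta<\alpha}\mathcal S_\beta\subseteq\mathcal Y_\alpha\setminus\{0\}$ that Proposition~\ref{p3.4} upgrades to an equality (with $\alpha$ shifted by one). The paper defers this computation and the resulting bound $d.cl.\kdim\,R\le d(\mathcal X)$ to Proposition~\ref{p3.4} and the final theorem, whereas you front-load it here; either organization is fine, but be aware that your second argument will be largely repeated when you reach the comparison of dimensions.
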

\begin{proof}
	By Theorem \ref{t1} and Corollary \ref{c3.3}, it is evident.
\end{proof}
 
 We are now ready to prove the following proposition, which is a crucial step towads proving our main result.

\begin{pro}\label{p3.4}
  Let $R$ be a ring, $\mathcal X = \mathcal X(R)$ be with $W$-topology and $\alpha \geq 0$ be an ordinal. Then $\mathcal Y_\alpha(R) - \{0\} = \cup_{\beta \leq \alpha} S_\beta$, where $ S_\beta$ is the set of all isolated ponits of $ \mathcal X_\beta$.
\end{pro}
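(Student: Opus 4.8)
The plan is to prove the stated identity by transfinite induction on $\alpha$, after first recasting both sides of the equality in a more convenient form. Two structural observations do most of the work. First, by Lemma \ref{l3.1} the set $S_\beta$ is exactly the set of minimal elements of $\mathcal X_\beta$, and since the isolated points of a space form the complement of its derived set we have $S_\beta = \mathcal X_\beta - \mathcal X_{\beta+1}$. As the chain $\mathcal X_0 \supseteq \mathcal X_1 \supseteq \cdots$ is decreasing and takes intersections at limit stages, a telescoping argument gives
\[
  \bigcup_{\beta \le \alpha} S_\beta \;=\; \mathcal X - \mathcal X_{\alpha+1};
\]
the only point needing care is that, for a point lying outside some $\mathcal X_\delta$, the least such $\delta$ is neither $0$ nor a limit ordinal, hence a successor. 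Secondly, since $\mathcal Y_\alpha \subseteq \mathcal{SH}(R)$ and $\mathcal X = \mathcal{SH}(R) - \{0\}$, we simply have $\mathcal Y_\alpha(R) - \{0\} = \mathcal X \cap \mathcal Y_\alpha$.

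Comparing these two observations, and using that the $\mathcal Y_\gamma$ increase with $\gamma$, the proposition becomes equivalent to the assertion
\[
  \mathcal X_\delta \;=\; \{\, L \in \mathcal X : L \notin \mathcal Y_\gamma \text{ for every } \gamma < \delta \,\} \qquad \text{for every ordinal } \delta \ge 0,
\]
(with the convention $\mathcal Y_{-1} = \{0\}$ from Definition \ref{d3}), and this is what I would establish by transfinite induction on $\delta$. The case $\delta = 0$ is immediate, since the right-hand side imposes no condition and equals $\mathcal X_0$. For a limit ordinal $\delta$, the induction hypothesis gives $\mathcal X_\delta = \bigcap_{\beta < \delta} \mathcal X_\beta = \bigcap_{\beta < \delta}\{\, L \in \mathcal X : L \notin \mathcal Y_\gamma \ \forall \gamma < \beta \,\}$, which collapses to $\{\, L \in \mathcal X : L \notin \mathcal Y_\gamma \ \forall \gamma < \delta \,\}$ because every $\gamma < \delta$ lies below some $\beta < \delta$.

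The crux is the successor case $\delta = \varepsilon + 1$. Here $\mathcal X_{\varepsilon+1} = \mathcal X_\varepsilon - S_\varepsilon$, where $S_\varepsilon$ is the set of minimal elements of $\mathcal X_\varepsilon = \{\, L \in \mathcal X : L \notin \mathcal Y_\gamma \ \forall \gamma < \varepsilon \,\}$ by the induction hypothesis. Thus I must show, for $L \in \mathcal X$ with $L \notin \mathcal Y_\gamma$ for all $\gamma < \varepsilon$, that $L$ fails to be minimal in $\mathcal X_\varepsilon$ if and only if $L \notin \mathcal Y_\varepsilon$. Unwinding Definition \ref{d3}, $L \notin \mathcal Y_\varepsilon$ means there is an $sh$-ideal $L' \subsetneq L$ with $L' \notin \mathcal Y_\gamma$ for all $\gamma < \varepsilon$; such an $L'$ is necessarily non-zero, because $0$ lies in $\mathcal Y_{-1}$ (and in every $\mathcal Y_\gamma$), so $L' \in \mathcal X_\varepsilon$ by the induction hypothesis and $L$ is non-minimal in $\mathcal X_\varepsilon$. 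Conversely, any element of $\mathcal X_\varepsilon$ strictly inside $L$ is precisely such a witness $L'$, so non-minimality of $L$ in $\mathcal X_\varepsilon$ forces $L \notin \mathcal Y_\varepsilon$. This closes the induction.

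Finally I would assemble the pieces: taking $\delta = \alpha+1$ in the displayed assertion and using $\bigcup_{\gamma \le \alpha} \mathcal Y_\gamma = \mathcal Y_\alpha$ yields $\mathcal X_{\alpha+1} = \mathcal X - \mathcal Y_\alpha$, whence $\bigcup_{\beta \le \alpha} S_\beta = \mathcal X - \mathcal X_{\alpha+1} = \mathcal X \cap \mathcal Y_\alpha = \mathcal Y_\alpha(R) - \{0\}$, as required. The main obstacle, such as it is, is purely bookkeeping: getting the transfinite telescoping right at limit stages, and handling the boundary convention $\mathcal Y_{-1} = \{0\}$ correctly in the successor step when $\varepsilon = 0$, where the clause ``there exists $\gamma < \varepsilon$'' in Definition \ref{d3} degenerates. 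No step requires any idea beyond Lemma \ref{l3.1}.
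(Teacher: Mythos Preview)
Your proof is correct and follows essentially the same strategy as the paper: transfinite induction resting on Lemma~\ref{l3.1} and on the identity $\mathcal X_\alpha = \mathcal X \setminus \bigcup_{\beta<\alpha} S_\beta$. The only difference is organizational. The paper inducts directly on the equality $\mathcal Y_\alpha - \{0\} = \bigcup_{\beta\le\alpha} S_\beta$ and invokes the telescoping identity $\mathcal X_\alpha = \mathcal X - \bigcup_{\beta<\alpha} S_\beta$ in the middle of the argument without justification; you instead isolate that telescoping identity first, then pass to complements and induct on the equivalent statement $\mathcal X_\delta = \{L\in\mathcal X : L\notin \mathcal Y_\gamma \text{ for all } \gamma<\delta\}$. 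Your version is slightly tidier in that it separates the purely topological telescoping from the comparison with the $\mathcal Y_\gamma$'s and handles the limit case explicitly, but the underlying idea and the work done in the successor step are identical to the paper's.
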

\begin{proof}
	We proceed by induction on $\alpha$. For $\alpha = 0$, since  $\mathcal Y_0(R) - \{0\}$ consists of all minimal $sh$-ideals of $R$, by Lemma \ref{l3.1}, we have $\mathcal Y_0(R) -\{0\} =  S_0$. Let us assume that $\mathcal Y_\gamma(R)-\{0\} = \cup_{\beta \leq \gamma} \mathcal S_\beta$ for all $\gamma < \alpha$. We show that $\mathcal Y_\alpha(R)-\{0\} = \cup_{\beta \leq \alpha} \mathcal S_\beta$. For this, let $L \in \cup_{\beta \leq \alpha} \mathcal S_\beta$. Then, $0 \ne L \in \mathcal S_\beta$, for some $\beta \leq \alpha$. If $L \in \mathcal S_\alpha$, then  by Lemma \ref{l3.1}, $L$ is a minimal element of $\mathcal Y_\alpha$. Hence, if $L' \in \mathcal X = \mathcal{SH}(R) - \{0\}$ and $L' \subsetneq L$, then $L' \notin \mathcal X_\alpha = \cap_{\beta < \alpha} \mathcal X_\beta = \mathcal X - \cup_{\beta < \alpha} \mathcal S_\beta$. This implies that  $L' \in  \mathcal S_\beta$ for some $\beta < \alpha$. Hence, $L' \in \cup_{\gamma \leq \beta}\mathcal S_\gamma$. By induction hypotesis, we have $L' \in \mathcal Y_\beta(R) - \{0\}$. This shows that $L \in \mathcal Y_\alpha(R) - \{0\}$. Now, let $L \notin \mathcal S_\alpha $. Then $L \in \mathcal S_\beta$, for some $\beta < \alpha$. This implies that $L \in \cup_{\gamma \leq \beta} \mathcal S_\gamma = \mathcal Y_\beta(R)- \{0\} \subseteq \mathcal Y_\alpha(R)-\{0\}$. Therfore, $\mathcal Y_\alpha(R)-\{0\} \supseteq \cup_{\beta \leq \alpha} \mathcal S_\beta$.\\
	Converesely, let $L \in \mathcal Y_\alpha(R) - \{0\}$. If  $L \notin \cup_{\beta < \alpha} \mathcal S_\beta$,  we show that $L \in \mathcal S_\alpha$. To this end, let $0 \ne L' \in \mathcal Y(R)$ and $L' \subsetneq L$. Then $L' \in \mathcal Y_\beta(R)-\{0\} = \cup_{\gamma \leq \beta} \mathcal S_\gamma$ for some $\beta < \alpha$. This implies that $L' \notin \mathcal X_\alpha = \mathcal X - \cup_{\gamma < \alpha} \mathcal S_\gamma$. But, $L \in \mathcal X_\alpha$ and so $L$ is a minimal $sh$-ideal of $\mathcal X_\alpha$. By Lemma \ref{l3.1}, $L \in \mathcal S_\alpha$. Therefore, $\mathcal Y_\alpha(R) - \{0\} \subseteq \bigcup_{\beta \leq \alpha} \mathcal S_\beta$ and this complete the proof.
\end{proof}

Finally, we conclude the article with the following result, which we were after.

\begin{theo}
	Let $R$ be a $D$-ring and $\mathcal X = \mathcal X(R)$ be with $W$-topology. Then 
	\begin{enumerate}
		\item $\mathcal X$ has derived dimension if and only if $R$ has dual-classical Krull dimension.
		\item $ d.cl.\kdim\,R \leq d(\mathcal X) \leq  d.cl.\kdim\,R + 1$. Moreover, if $d(\mathcal X)$ is a limit ordinal, then $d(\mathcal X) = d.cl.\kdim\,R$, otherwise, $d(\mathcal X) = d.cl.\kdim\,R + 1$
	\end{enumerate} 
\end{theo}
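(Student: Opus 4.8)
The plan is to derive both statements from Proposition \ref{p3.4}, which matches the derived filtration of $\mathcal X$ with the dual-classical Krull filtration of $R$, supplemented by Theorem \ref{t1} and Corollary \ref{c3.3}. For $(1)$, the forward implication is already available and needs no $D$-ring hypothesis: if $\mathcal X$ has derived dimension, then $R$ has $dcc$ on $sh$-ideals by Corollary \ref{c3.3}, hence has dual-classical Krull dimension by Theorem \ref{t1}$(1)$; this is exactly the theorem stated immediately before Proposition \ref{p3.4}. For the converse I would use the $D$-ring hypothesis precisely once, through Theorem \ref{t1}$(2)$: a $D$-ring with dual-classical Krull dimension has $dcc$ on $sh$-ideals, and then Corollary \ref{c3.3} gives that $\mathcal X$ has derived dimension.

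For $(2)$, the heart of the matter is the book-keeping identity
\[ \mathcal X_{\alpha+1}=\mathcal{SH}(R)\setminus\mathcal Y_\alpha(R)\ \ (\alpha\ge 0),\qquad\text{hence}\qquad \mathcal X_{\alpha+1}=\emptyset\iff\mathcal Y_\alpha(R)=\mathcal Y(R). \]
This follows by combining the identity $\mathcal X_\alpha=\mathcal X\setminus\bigcup_{\beta<\alpha}S_\beta$, recorded in the proof of Proposition \ref{p3.4} (and already implicit in the paragraph preceding it), with Proposition \ref{p3.4} itself, $\bigcup_{\beta\le\alpha}S_\beta=\mathcal Y_\alpha(R)-\{0\}$, while keeping track of $0$, which lies in $\mathcal Y_\alpha(R)$ but not in $\mathcal X$. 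In view of $(1)$ I may assume both dimensions exist; write $\delta=d(\mathcal X)$, the least ordinal with $\mathcal X_\delta=\emptyset$.

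The comparison is then a case split on $\delta$. If $\delta=0$, then $\mathcal X=\emptyset$, so $\mathcal{SH}(R)=\{0\}$ and $d.cl.\kdim\,R=-1=\delta-1$. If $\delta=\alpha+1$ is a successor, the identity gives $\mathcal Y_\alpha(R)=\mathcal Y(R)$, whereas $\mathcal X_{\alpha'+1}\supseteq\mathcal X_\alpha\ne\emptyset$ for every $\alpha'<\alpha$ forces $\mathcal Y_{\alpha'}(R)\ne\mathcal Y(R)$; hence $d.cl.\kdim\,R=\alpha=\delta-1$. If $\delta$ is a limit ordinal, then $\beta<\delta$ implies $\beta+1<\delta$, so $\mathcal X_{\beta+1}\ne\emptyset$ and thus $\mathcal Y_\beta(R)\ne\mathcal Y(R)$, while $\mathcal X_{\delta+1}\subseteq\mathcal X_\delta=\emptyset$ forces $\mathcal Y_\delta(R)=\mathcal Y(R)$; hence $d.cl.\kdim\,R=\delta$. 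Assembling the three cases gives $d.cl.\kdim\,R\le d(\mathcal X)\le d.cl.\kdim\,R+1$, with $d(\mathcal X)=d.cl.\kdim\,R$ exactly when $\delta$ is a limit ordinal and $d(\mathcal X)=d.cl.\kdim\,R+1$ otherwise.

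I expect the difficulty here to be pedantic rather than conceptual: making the successor/limit dichotomy together with the degenerate case $\delta=0$ align exactly with the final clause of $(2)$, and verifying that the identity $\mathcal X_\alpha=\mathcal X\setminus\bigcup_{\beta<\alpha}S_\beta$ really persists at limit stages, where $\mathcal X_\alpha$ is defined as an intersection — which is precisely where the closedness of each $\mathcal X_\beta$, checked just before Proposition \ref{p3.4}, is used.
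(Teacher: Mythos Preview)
Your proposal is correct and follows essentially the same route as the paper: both arguments rest on Proposition~\ref{p3.4} together with the identity $\mathcal X_{\alpha}=\mathcal X\setminus\bigcup_{\beta<\alpha}S_\beta$, and invoke Theorem~\ref{t1} and Corollary~\ref{c3.3} for part~(1). The only difference is organizational: the paper fixes $\alpha=d.cl.\kdim\,R$ and bounds $d(\mathcal X)$ between $\alpha$ and $\alpha+1$ before splitting on the parity of $d(\mathcal X)$, whereas you fix $\delta=d(\mathcal X)$ and compute $d.cl.\kdim\,R$ in each of the cases $\delta=0$, successor, limit; the content is the same. One small remark: the closedness of the $\mathcal X_\beta$ is actually needed at \emph{successor} stages (to ensure $\mathcal X_\beta'\subseteq\mathcal X_\beta$, so that $\mathcal X_{\beta+1}=\mathcal X_\beta\setminus S_\beta$), while the limit case of the identity is pure De~Morgan; your final caution slightly misplaces this, but it does not affect the argument.
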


\begin{proof}
	
	$(1)$ By Theorem \ref{t1} and  Corollary \ref{c3.3},  is evident.\\
	$(2)$ Let $d.cl.\kdim\,R = \alpha$. Then $\mathcal Y =\mathcal Y_\alpha(R)$ and by Proposition \ref{p3.4} \linebreak  $\mathcal X = \cup_{\beta \leq \alpha} \mathcal S_\beta $. Hence, $\mathcal X_{\alpha + 1} = \mathcal X-
	\cup _{\beta \leq \alpha}S_\beta = \emptyset$. Thus $d(\mathcal X) \leq \alpha + 1$. Now, let $d(\mathcal X) < \alpha$. Then, there exists $\gamma < \alpha$ such that $\mathcal X_\gamma = \emptyset$. This implies that $\mathcal S_\gamma = \mathcal S_{\gamma + 1} = \dots = \mathcal S_\alpha = \emptyset$. Hence, $\mathcal X = \cup_{\beta \leq \alpha} \mathcal S_\beta = \cup_{\beta \leq \gamma} \mathcal S_\beta$. Consequently,  $\mathcal Y(R) = \mathcal Y_\gamma(R)$ and so  $d.cl.\kdim\,R \leq \gamma < \alpha$, which is a contradinction. Thus, $\alpha \leq d(\mathcal X)$. Therefor  $ \alpha \leq d(\mathcal X) \leq  \alpha + 1$. For the last part, we note that if $d(\mathcal X)$ is a limit ordinal, then clearly $d(\mathcal X) \ne \alpha + 1 $ and so $d(\mathcal X) = \alpha$. Finally, if $d(X) = \gamma + 1$, then $\mathcal X_{\gamma + 1} =\mathcal X-
	\cup _{\beta \leq \gamma}S_\beta= \emptyset$. Hence, $\mathcal X=
	\cup _{\beta \leq \gamma}S_\beta$. By Proposition \ref{p3.4}, $\mathcal Y_\gamma (R) =\mathcal Y$ and so $d.cl.\kdim\,R = \alpha \leq \gamma$. This implies that $\alpha + 1 \leq \gamma + 1 =d(\mathcal X)$. Thus $d(\mathcal X) = \alpha + 1$ and this complete the proof.
\end{proof}



\begin{thebibliography} {50}
\bibitem{goo}
Goodearl, K.R., Warfield, R.B. (1989).
 {\it  An Introduction to Noncommutative Noetherian Rings}.  Cambridge university press, Cambridge, UK.

\bibitem{go-ro}
  Gordon, R.,  Robson, J.C. (1973). {\it  Krull dimension}.  Mem. Amer. Math. Soc. 133.

\bibitem{ja-sh}
 Javdannezhad, S.M.,  Mousavinasab, S.F., Shirali, N. (2022). {\it On Dual-classical Krull dimension of rings}. Quaestiones Mathematicae. 45(7):1-13.

\bibitem{kar}
Karamzadeh, O.A.S. (1983). {\it On the classical Krull dimension of rings}. Fundamenta Mathematica. 10.4064/fm-117-2-103-108.
 
\bibitem{ste}
Stephenson, W. (1999).  {\it Modules whose lattice of submodules is
distributive}. Proc. London. Math. Soc. 28(2), 1-15.

\bibitem{wis}
Wisbauer, R. (1991). {\it Foundations of Module and Ring Theory}.
Gordon and Breach Science Publishers, Reading (1991).

 \bibitem{wood}
 Woodward, A. (2007).  {\it Rings And Modules With Krull Dimension }. phd. thesis, Glasgow (2007).

 
 
 
\end{thebibliography}
\end{document}